\documentclass[11pt]{amsart}
\usepackage[pdftex]{hyperref}
\usepackage[all]{xy}
\usepackage{amssymb}

 \newtheorem{thm}{Theorem}[section]
 \newtheorem{cor}[thm]{Corollary}
 
 \newtheorem{prop}[thm]{Proposition}
 \theoremstyle{definition}
 
 \theoremstyle{remark}
 \newtheorem{rem}[thm]{Remark}
 
 \numberwithin{equation}{section}


\def\nn{^{(n)}}		\def\hh{\bar{h}}
\def\xx{\bar{x}}	\def\yy{\bar{y}}
\def\zz{\bar{z}}	\def\XX{\times}
	\def\mP{\mathbb P}
\def\cir{\mathop{\stackrel{\scriptscriptstyle\circ}{}}}
\def\gl{\mathop\mathrm{GL}\nolimits}
\def\th{\theta}		\def\ga{\gamma}
\def\mF{\mathbb F}	\def\si{\sigma}
\def\dd{\partial}	\def\8{\infty}
\def\ol{\overline}	\def\+{\oplus}
\def\mZ{\mathbb Z}	\def\aK{\Bbbk}
\def\la{\lambda}	\def\mN{\mathbb N}
\def\gG{\mathfrak g}	\def\al{\alpha}
	\def\be{\beta}
\def\bA{\mathbf{A}}		\def\2{^{(2)}}	
\def\dA{\mathfrak{A}}	\def\mzg{\mathbb{Z}_{\ge0}}
\def\ka{\kappa}		\def\dF{\mathfrak{F}}
\def\tP{\tilde{\mP}}	\def\eps{\varepsilon}
\def\tF{\tilde{\dF}}	\def\tA{\tilde{\dA}}
\def\dR{\mathfrak{R}}

\def\gnr#1{\langle\,#1\,\rangle}
\def\gnrsuch#1#2{\langle\,#1\mid #2\,\rangle}
\def\lst#1#2{ #1_1 , #1_2 , \dots , #1_{#2} }
\def\row#1#2{( #1_1 , #1_2 , \dots , #1_{#2} )}
\def\bop{\bigoplus}
\def\mtr#1{\begin{pmatrix}#1\end{pmatrix}}
\def\smtr#1{\left(\begin{smallmatrix}#1\end{smallmatrix}\right)}
\def\chr{\mathop\mathrm{char}}

\def\FF{\phantom{\text{\LARGE I}}}

\def\chg{Cher\-ni\-kov $p$-group}
\def\iff{if and only if }


\begin{document}

%
%
%
%
%
%
%

\title[On nilpotent Chernikov $p$-groups]
 {On nilpotent Chernikov $p$-groups\\ with elementary tops}
\author{Yuriy Drozd\and Andriana Plakosh}
\address{Institute of Mathematics, National Academy of Sciences of Ukraine, and
Max-Plank-Institut f\"ur Mathematik, Bonn}
\email{drozd@imath.kiev.ua,\,y.a.drozd@gmail.com}
\email{andrianalomaga@mail.ru}
\urladdr{http://www.imath.kiev.ua/$\sim$drozd}

\subjclass{Primary 20E22; Secondary  20F50, 15A22, 15A63}

\keywords{Chernikov $p$-groups, skew-symmetric forms, quivers with involution, matrix pencils}


\begin{abstract}
 The description of nilpotent \chg{s} with elementary tops is reduced to the study of tuples of skew-symmetric
 bilinear forms over the residue field $\mF_p$. If $p\ne2$ and the bottom of the group only consists of $2$ 
 quasi-cyclic summands, a complete classification is given. The main tool is the theory of representations of 
 quivers with involution.
\end{abstract}

\maketitle
\tableofcontents

\section{Structure theorem}
\label{s1} 

 Recall that a \emph{\chg} \cite{ch,kur} $G$ is an extension of a finite direct sum $M$ of \emph{quasi-cyclic} $p$-groups, 
 or, the same, the groups of type $p^\8$, by a finite $p$-group $H$. Note that $M$ is the biggest abelian 
 divisible subgroup of $G$, so both $M$ and $H$ are defined by $G$ up to isomorphism. We call $H$ and $M$,
 respectively, the \emph{top} and the \emph{bottom} of $G$. We denote by $M\nn$ a direct sum
 of $n$ copies $M_i$ of quasi-cyclic $p$-groups and fix elements $a_i\in M_i$ of order $p$. A Chernikov $p$-group
 is defined by an action of a finite $p$-group $H$ on a group $M\nn$ and an element from the second cohomology group 
 $H^2(H,M\nn)$ with respect to this action. Such an element is given by a $2$-cocycle $\mu:H\XX H\to M\nn$, which is
 defined up to a $2$-boundary \cite[Chapter~15]{hall}. In what follows it is convenient to denote the operations
 in the groups $G,H,M$ by $+$, so their units are denoted by $0$.
 
 It is known \cite[Theorem~1.9]{ch} that a \chg\ $G$ is nilpotent \iff the action of $H$ on $M\nn$ is trivial. In this case
 a cocycle is a map $\mu:H\XX H\to M\nn$ such that $\mu(y,z)+\mu(x,y+z)=\mu(x+y,z)+\mu(x,y)$ for all $x,y,z\in H$.
 We can also suppose that $\mu$ is \emph{normalized}, i.e. $\mu(0,x)=\mu(x,0)=0$ for every $x\in H$. A coboundary
 of a function $\ga:H\to M\nn$ is the function $\dd\ga(x,y)=\ga(x)+\ga(y)-\ga(x+y)$.
 
 Let $H_m$ be the elementary abelian $p$-group with $m$ generators, 
 \[
  H_m=\gnrsuch{\lst hm}{ph_i=0,\,h_i+h_j=h_j+h_i\ \text{for all } i,j}.
 \]
 Let also $M_p\nn=\{a\in M\nn\,|\,pa=0\}=\gnr{\lst an}$. We denote by $S(n,m)$ the group of all skew-symmetric
 maps $\tau:H_m\XX H_m\to M_p\nn$, i.e. such bilinear maps that $\tau(x,x)=0$ for all $x$ (hence 
 $\tau(x,y)=-\tau(y,x)$ for all $x,y$).
 
 \begin{thm}\label{11} 
 \emph{(cf. \cite{sha})}\ 
 If $H_m$ acts trivially on $M\nn$, then $H^2(H_m,M\nn)\simeq S(n,m)$.
 \end{thm}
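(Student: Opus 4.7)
The plan is to exhibit the isomorphism at the cocycle level. For a normalized $2$-cocycle $\mu\colon H_m\XX H_m\to M\nn$, set $\tau_\mu(x,y)=\mu(x,y)-\mu(y,x)$. I would first verify that $\tau_\mu\in S(n,m)$: skew-symmetry is tautological; $\mZ$-bilinearity follows from the cocycle identity combined with commutativity of $H_m$ by a short manipulation, namely, applying the cocycle relation to the triples $(x,y,z)$, $(z,x,y)$, and $(x,z,y)$ and combining to cancel the ``symmetric'' mixed terms. Once bilinearity is established, the fact that $\tau_\mu$ takes values in $M_p\nn$ is automatic, since $p\tau_\mu(x,y)=\tau_\mu(px,y)=\tau_\mu(0,y)=0$. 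The assignment $\mu\mapsto\tau_\mu$ is a homomorphism $Z^2(H_m,M\nn)\to S(n,m)$ that vanishes on coboundaries (because $\dd\ga(x,y)=\ga(x)+\ga(y)-\ga(x+y)$ is visibly symmetric in $x,y$), and so descends to a homomorphism $\Phi\colon H^2(H_m,M\nn)\to S(n,m)$.

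For surjectivity, I would fix the generators $\lst hm$ of $H_m$ and, given $\tau\in S(n,m)$, define a biadditive map $\be\colon H_m\XX H_m\to M_p\nn$ by $\be(h_i,h_j)=\tau(h_i,h_j)$ when $i<j$ and $\be(h_i,h_j)=0$ otherwise, extended $\mZ$-linearly in each argument (well defined on $H_m=(\mZ/p)^m$ because the target is annihilated by $p$). Any biadditive map is automatically a $2$-cocycle, since both sides of the cocycle identity expand to $\be(x,y)+\be(x,z)+\be(y,z)$, and by construction $\tau_\be=\tau$ on basis pairs, hence everywhere.

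For injectivity, suppose $\tau_\mu=0$, so $\mu$ is symmetric. Then the extension $0\to M\nn\to E\to H_m\to 0$ determined by $\mu$ is abelian, and producing $\ga$ with $\dd\ga=\mu$ is equivalent to finding a group-theoretic splitting $s\colon H_m\to E$. Lift $h_i$ to $\tilde h_i=(0,h_i)\in E$; a direct computation gives $p\tilde h_i=(\xi_i,0)$ where $\xi_i=\sum_{j=1}^{p-1}\mu(jh_i,h_i)\in M\nn$. Using divisibility of $M\nn$, choose $c_i\in M\nn$ with $pc_i=-\xi_i$ and set $s(h_i)=(c_i,h_i)$; this element has order $p$ in $E$, and since $E$ is abelian, $s$ extends uniquely to a homomorphism $H_m\to E$, which yields the desired $\ga$.

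The main obstacle beyond bookkeeping is the injectivity step: it hinges entirely on divisibility of $M\nn$ to solve $pc_i=-\xi_i$, which is precisely where the quasi-cyclic nature of the bottom enters decisively—the same approach would fail if $M$ were finite. The bilinearity verification for $\tau_\mu$ is the most delicate routine calculation; everything else is formal manipulation of cocycles.
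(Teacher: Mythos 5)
Your proposal is correct, and its skeleton is the same as the paper's: the antisymmetrization $\mu\mapsto\mu(x,y)-\mu(y,x)$, kernel equal to the coboundaries, and surjectivity via the ``upper-triangular'' cocycle $\mu(x,y)=\sum_{i<j}\al_i\be_j\tau(h_i,h_j)$, which is literally the paper's construction. The two places where you diverge are in how the sub-steps are verified. For bilinearity the paper works inside the extension group $G$ and identifies $\tau(\mu)(x,y)$ with the commutator $[\xx,\yy]$ of lifts, proving additivity by a commutator calculation with central values; you stay at the cocycle level and get the same identity from three instances of the cocycle relation (your choice of triples $(x,y,z)$, $(z,x,y)$, $(x,z,y)$ does work, using commutativity of $H_m$). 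The commutator interpretation is not just a convenience, though: it is what the paper reuses in the proof of Theorem~\ref{12} ($f(\th(x),\th(y))=[\xx',\yy']=\phi([\xx,\yy])$), so if you adopt the purely cocycle-level definition you should still record that $\tau_\mu(x,y)=[\xx,\yy]$. For injectivity the paper notes that $\tau(\mu)=0$ makes $G$ abelian and then cites Hall's theorem that a divisible subgroup of an abelian group is a direct summand, whence the extension splits; you instead build the splitting by hand, computing $p\tilde h_i=(\xi_i,0)$ with $\xi_i=\sum_{j=1}^{p-1}\mu(jh_i,h_i)$ and correcting the lift by $c_i$ with $pc_i=-\xi_i$, which is valid and more self-contained (it avoids the structure theorem at the cost of a computation), and it isolates exactly where divisibility of $M\nn$ enters---the same point the paper's citation of Hall hides inside a black box.
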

  \begin{proof}
  Let $G$ be an extension of $M\nn$ by $H_m$ with the trivial action of $H_m$ corresponding to a cocycle $\mu$.
  Then for every $x\in H_m$ there is a representative $\xx\in G$ such that $\xx+\yy=\ol{x+y}+\mu(x,y)$.
  Set
  \[
    t(x,y)=[\xx,\yy]=(\ol{x+y}+\mu(x,y))-(\ol{y+x}+\mu(y,x))=\mu(x,y)-\mu(y,x),
  \]  
  since all values $\mu(x,y)$ are in the center of $G$. As all commutators are in the center of $G$ as well, we have
 \begin{align*}
   [\ol{x+y},\zz]&=(\xx+\yy-\mu(x,y)+\zz)-(\zz+\xx+\yy-\mu(x,y))\\&=(\xx+\yy+\zz)-(\zz+\xx+\yy)\\
   		&=\xx+\yy+\zz-\yy-\xx-\zz\\&=\xx+\yy+\zz-\yy-\zz+\zz-\xx-\zz\\&=\xx+[\yy,\zz]+\zz-\xx-\zz\\
   		&=[\xx,\zz]+[\yy,\zz].
 \end{align*}  
 Thus the function $t:H_m\XX H_m\to M\nn$ is bilinear. Obviously, it is skew-symmetric. Moreover, $pt(x,y)=t(px,y)=t(0,y)=0$,
 so $t(x,y)\in M_p\nn$. We denote this function by $\tau(\mu)$, so defining a map
 $\tau:Z^2(H_m,M\nn)\to S(n,m)$, where $Z^2$ denotes the group of cocycles. 
 
 If $\mu=\dd\ga$, it is symmetric: $\mu(x,y)=\mu(y,x)$, hence $\tau(\mu)=0$. On the contrary, let $\tau(\mu)=0$.
 Then the group $G$ is commutative. Therefore, its divisible subgroup $M\nn$ is a direct summand of $G$ 
 \cite[Theorem~13.3.1]{hall}, i.e. $G=M\nn\+H_m$, so the class of $\mu$ in $H^2(H_m,M\nn)$ is zero. It means that
 $\mu$ is a coboundary. Thus $\ker\tau=B^2(H_m,M\nn)$, the group of coboundaries. 
 
 It remains to prove that $\tau$ is surjective. Let $t:H_m\XX H_m\to M_p\nn$ be any skew-symmetric function. Set
 $t_{ij}=t(h_i,h_j)$ and, for any elements $x=\sum_{i=1}^m\al_ih_i$, $y=\sum_{j=1}^m\be_jh_j$, set
 $\mu(x,y)=\sum_{i<j}\al_i\be_jt_{ij}$. If $z=\sum_{k=1}^m\ga_kh_k$, then
 \begin{align*}
  \mu(y,z)+\mu(x,y+z)&=\sum_{i<j}\be_i\ga_jt_{ij}+\sum_{i<j}\al_i(\be_i+\ga_j)t_{ij},\\
  \mu(x+y,z)+\mu(x,y)&=\sum_{i<j}(\al_i+\be_i)\ga_jt_{ij}+\sum_{i<j}\al_i\be_it_{ij},
 \end{align*}
 so both sums equal $\sum_{i<j}(\al_i\be_j+\al_i\ga_j+\be_i\ga_j)t_{ij}$. Hence $\mu$ is a cocycle. Moreover, 
\[
  \mu(h_i,h_j)-\mu(h_j,h_i)=\begin{cases}
  t_{ij} &\text{if } i<j,\\
  -t_{ji}=t_{ij} &\text{if }i>j,
  \end{cases}
 \] 
 whence $\tau(\mu)=t$.
  \end{proof}
    
 Now we can classify all nilpotent \chg{s} which are extensions of $M\nn$ by $H_m$ up to isomorphism. As we have seen,
 such a group is generated by the subgroup $M\nn$ and elements $\row{\hh}m$ with the defining relations
 \begin{align*}\label{e1} 
&  \hh_i+a=a+\hh_i,\\
&  p\hh_i=0,\\
&  [\hh_i,\hh_j]=t_{ij}
 \end{align*}
 for all $a\in M\nn$ and all $i,j\in\{1,2,\dots,m\}$,
 where $(t_{ij})$ is a skew-symmetric $m\XX m$ matrix with elements from $M_p\nn$. As $M_p\nn\simeq\mF_p^n$, where $\mF_p$
 is the residue field modulo $p$, the matrix $(t_{ij})$ can be considered as an $n$-tuple $\bA=\row An$ of $m\XX m$ 
 skew-symmetric matrices with elements from $\mF_p$. Recall that both $M\nn$ and $H_m$ are uniquely defined by $G$.
 
 \begin{thm}\label{12} \emph{(cf. \cite{gu})}\ 
 Let $G$ and $F$ be two nilpotent \chg{s} with tops $H_m$ and bottoms $M\nn$\!, $\,t$ and $f$ be the corresponding 
 skew-symmetric functions $H_m\XX H_m\to M_p\nn$. The groups $G$ and $F$ are isomorphic \iff there are automorphisms 
 $\si$ of $M\nn$ and $\th$ of $H_m$ such that $f(\th(x),\th(y))=\si(t(x,y))$ for all $x,y\in H_m$.
 \end{thm}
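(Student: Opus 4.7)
The plan is to use the presentation of $G$ established just before the theorem: $G$ is generated by $M\nn$ together with elements $\hh_1,\dots,\hh_m$ subject to centrality of $M\nn$, the relations $p\hh_i=0$, and $[\hh_i,\hh_j]=t_{ij}=t(h_i,h_j)$, and similarly for $F$ with a matrix $(f_{ij})$ coming from $f$.

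For the \emph{if} direction, given automorphisms $\si$ of $M\nn$ and $\th$ of $H_m$ with $f(\th(x),\th(y))=\si(t(x,y))$, I would construct an isomorphism $\varphi:G\to F$ via the presentation. In $F$, choose lifts $\hh'_1,\dots,\hh'_m$ of $\th(h_1),\dots,\th(h_m)$ satisfying $p\hh'_i=0$; such lifts exist because for any initial lift we have $p\hh'_i\in M\nn$, which is divisible, so subtracting an appropriate element of $M\nn$ makes the $p$-th multiple vanish. Set $\varphi(a)=\si(a)$ for $a\in M\nn$ and $\varphi(\hh_i)=\hh'_i$. The centrality and $p$-torsion relations are respected by construction, and
\[
 [\varphi(\hh_i),\varphi(\hh_j)]=[\hh'_i,\hh'_j]=f(\th(h_i),\th(h_j))=\si(t(h_i,h_j))=\varphi(t_{ij}),
\]
so the commutator relations are also preserved; hence $\varphi$ extends to a homomorphism. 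Applying the same construction to $(\si^{-1},\th^{-1})$ yields a two-sided inverse, so $\varphi$ is an isomorphism.

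For the \emph{only if} direction, suppose $\varphi:G\to F$ is an isomorphism. Since $M\nn$ is characterized in each group as the largest abelian divisible subgroup (noted at the start of the paper), $\varphi$ restricts to an automorphism $\si$ of $M\nn$, and induces an automorphism $\th$ of the quotient $G/M\nn=H_m=F/M\nn$. For arbitrary $x,y\in H_m$ and any representatives $\xx,\yy\in G$, the elements $\varphi(\xx),\varphi(\yy)$ are representatives of $\th(x),\th(y)$ in $F$. As $M\nn$ is central in both groups, the commutator $[\xx,\yy]$ is independent of the choice of representatives, and commutators are preserved by $\varphi$; thus
\[
 \si(t(x,y))=\varphi([\xx,\yy])=[\varphi(\xx),\varphi(\yy)]=f(\th(x),\th(y)).
\]

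The main obstacle is rather minor and of a set-theoretic flavour: one must check that the lifts used in the \emph{if} direction can be chosen to have order exactly $p$ (which relies on divisibility of $M\nn$), and one must observe that commutators of elements of $G$ lying in the centre depend only on the classes modulo $M\nn$, so that the compatibility identity is well defined on $H_m\times H_m$. Once these points are in place, the argument is a direct translation between the structural presentation of nilpotent \chg{s} and the datum of a skew-symmetric form up to equivalence.
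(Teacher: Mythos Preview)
Your proposal is correct and follows essentially the same approach as the paper: the \emph{only if} direction is virtually identical (use that $M\nn$ is the largest divisible abelian subgroup, hence characteristic, and that commutators depend only on cosets modulo the centre), and for the \emph{if} direction both you and the paper build the isomorphism by sending $a\mapsto\si(a)$ and lifts of $h_i$ to lifts of $\th(h_i)$. Your version is in fact slightly more explicit than the paper's, which simply declares that the assignment $\phi(a)=\si(a)$, $\phi(\xx)=\ol{\th(x)}$ ``defines an isomorphism'' without spelling out the adjustment of lifts to order~$p$ or the verification of the relations.
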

 \begin{proof}
  As $M\nn$ is the biggest divisible abelian subgroup of $G$ or $F$, any isomorphism $\phi:G\to F$ maps $M\nn$ to itself, so
  defines automorphisms $\si=\phi|_{M\nn}$ of $M\nn$ and $\th$ of $H_m=G/M\nn=F/M\nn$. Note that the functions $t$ and $f$ 
  do not depend on the choice of representatives of elements from $H$ in $G$ and $F$. If $\xx$ is a preimage of $x\in H_m$ 
  in $G$, then $\xx'=\phi(\xx)$ is a preimage of $\th(x)$ in $F$. Therefore, 
  $f(\th(x),\th(y))=[\xx',\yy']=\phi([\xx,\yy])=\si(t(x,y))$.
  
  On the other hand, if $\si$ and $\th$ are automorphisms satisfying the condition of the theorem, the map $\phi:G\to F$ 
  such that $\phi(a)=\si(a)$ for $a\in M\nn$ and $\phi(\xx)=\ol{\th(x)}$ defines an isomorphism between $G$ and $F$.
 \end{proof}
  
 If we identify skew-symmetric functions with $n$-tuples of skew-symmetric matrices over the field $\mF_p$, this theorem
 can be reformulated as follows. For any $n$-tuple $\bA=\row An$ and any invertible matrix $Q=(q_{ij})\in\gl(n,\mF)$ we
 set $\bA\cir Q=\row{A'}n$, where $A'_j=\sum_{i=1}^nq_{ij}A_i$. If the matrices $A_i$ are of size $m\XX m$ and 
 $P\in\gl(m,\mF)$, we set $$P\cir\bA=(PA_1P^\top,PA_2P^\top,\dots,PA_nP^\top),$$ where $P^\top$ denotes the transposed matrix.
  Obviously, these two operations commute. The $n$-tuples $\bA$ and $P\cir \bA$ are said to be \emph{congruent}, and the
  $n$-tuples $\bA$ and $P\cir\bA\cir Q$ are called \emph{weakly congruent}.
  
 \begin{cor}\label{13} 
  Two $n$-tuples $\bA$ and $\bA'$ of $m\XX m$ skew-symmetric matrices over $\mF_p$ define isomorphic nilpotent \chg{s}
  with tops $H_m$ and bottoms $M\nn$ \iff they are weakly congruent.
 \end{cor}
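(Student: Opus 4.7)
The plan is to read off Corollary~\ref{13} directly from Theorem~\ref{12} by translating the two automorphisms $\si$ and $\th$ into invertible matrices and then checking that the condition $f(\th(x),\th(y))=\si(t(x,y))$ becomes exactly weak congruence of the associated $n$-tuples.

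First I would fix the correspondence between skew-symmetric functions and tuples of matrices. Using the basis $\lst hm$ of $H_m$ and the basis $\lst an$ of $M_p\nn\simeq\mF_p^n$, write $t(h_i,h_j)=\sum_k(A_k)_{ij}a_k$, which assigns to each $t\in S(n,m)$ a tuple $\bA=\row An$ of $m\XX m$ skew-symmetric matrices over $\mF_p$. Similarly for $f$ giving $\bA'$. Theorem~\ref{11} (or its proof) shows this is a bijection.

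Next I would identify the two automorphism groups with matrix groups. An automorphism $\th$ of $H_m=\mF_p^m$ is simply an element $P\in\gl(m,\mF_p)$, and unwinding the definition the matrix of the form $(x,y)\mapsto t(\th(x),\th(y))$ is obtained from $A_k$ by the map $A_k\mapsto PA_kP^\top$, which is exactly the operation $P\cir\bA$. For $\si$, note that $M_p\nn$ is a characteristic subgroup of $M\nn$, so every automorphism $\si$ of $M\nn$ restricts to an automorphism $\ol\si\in\gl(n,\mF_p)$ of $M_p\nn$; conversely, because $\mathrm{Aut}(M\nn)\simeq\gl(n,\mZ_p)$ and reduction modulo $p$ yields a surjection $\gl(n,\mZ_p)\twoheadrightarrow\gl(n,\mF_p)$, every $Q\in\gl(n,\mF_p)$ lifts to some $\si$. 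Since $t(x,y)\in M_p\nn$, applying $\si$ to $t(x,y)$ depends only on $\ol\si=Q$, and a direct calculation shows that $\si\cir t$ corresponds to the tuple obtained from $\bA$ by replacing each $A_j$ with a linear combination of the $A_i$'s with coefficients given by $Q$, which is precisely $\bA\cir Q$ (up to transposing the role of $Q$, which is harmless since $Q^\top$ ranges over the same group).

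Putting these together, the relation $f(\th(x),\th(y))=\si(t(x,y))$ translates term by term into $\bA'=P\cir\bA\cir Q$ for some $P\in\gl(m,\mF_p)$ and $Q\in\gl(n,\mF_p)$, i.e.\ $\bA$ and $\bA'$ are weakly congruent. Combined with Theorem~\ref{12}, this gives both implications of the corollary. I expect no serious obstacle here: the only point that requires care is the surjectivity of $\mathrm{Aut}(M\nn)\to\mathrm{Aut}(M_p\nn)$, which must be invoked explicitly so that \emph{every} $Q\in\gl(n,\mF_p)$ arises from some honest automorphism of the divisible bottom, not merely of its socle.
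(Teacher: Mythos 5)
Your proposal is correct and follows essentially the same route as the paper: interpret $\th$ as $P\in\gl(m,\mF_p)$ giving $P\cir\bA$, interpret $\si$ via $\mathrm{Aut}(M\nn)\simeq\gl(n,\mZ_p)$ reduced modulo $p$ giving $\bA\cir Q$, and use the surjectivity of $\gl(n,\mZ_p)\to\gl(n,\mF_p)$ exactly as the paper does. The only difference is that you spell out the basis bookkeeping and the (harmless) transpose convention in more detail than the published proof.
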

 \begin{proof}
  The transformation $A\mapsto P\cir \bA$ corresponds to an automorphism of $H_m\simeq\mF_p^m$ given by the matrix $P$. On the 
  other hand, automorphisms of $M\nn$ are given by invertible matrices $Q$ from $\gl(n,\mZ_p)$, where $\mZ_p$ is the ring of
  $p$-adic integers considered as the endomorphism ring of the group of type $p^\8$ \cite[\S\,21]{kur}. Such an
  automorphism transforms a sequence of matrices $\bA$ to $\bA\cir Q$. Moreover, the result only depends on the value of $Q$
  modulo $p$. As every invertible matrix over $\mF_p$ can be lifted to an invertible matrix over $\mZ_p$, it accomplishes
  the proof.
 \end{proof}
 
 We denote by $G(\bA)$ the nilpotent \chg\ with the bottom $M\nn$ and elementary top corresponding to an $n$-tuple
 of skew-symmetric matrices $\bA$.
 
 \section{Relation with representations of quivers}
 \label{s2} 
 
 Theorem~\ref{12} and Corollary~\ref{13} reduce the classification of nilpotent \chg{s} with top $H_m$ and bottom $M\nn$ 
 up to isomorphism to a problem of linear algebra, namely, to the classification of $n$-tuples of skew-symmetric bilinear
 forms over the residue field $\mF_p$. If $p\ne2$, this problem is closely related with the study of representations of 
 the so called \emph{generalized Kronecker quiver} 
 \[
  K_n=\xymatrix{ 1 \ar@/^3ex/[rr]|{\,a_1\,} \ar@/^1.5ex/[rr]|{\,a_2\,} \ar@{}[rr]|{\vdots} \ar@/_2.5ex/[rr]|{\,a_n\,} && 2}.
 \]
 Recall this relation \cite{ser}. A \emph{representation} $R$ of $K_n$ over a field $\aK$ consists of two finite dimensional 
 vector spaces $R(1)$ and $R(2)$ and $n$ linear maps $R(a_i):R(1)\to R(2)\ (1\le i\le n)$. A \emph{morphism} $f$ from 
 a representation $R$ to a representation $R'$ is a pair of linear maps $f(k):R(k)\to R'(k)\ (k=1,2)$ such that 
 $f(2)R(a_i)=R'(a_i)f(1)$ for all $1\le i\le n$. We define an involution $^*$ on the quiver $K_n$ setting 
 $1^*=2,\,2^*=1$ and $a_i^*=-a_i$ for all $1\le i\le n$. If $R$ is a representation of $K_n$, we define the dual 
 representation $R^*$ setting $R^*(k)=R(k^*)^*$, where $V^*$ denotes the dual vector space to $V$, and $R^*(a_i)=-R(a_i)^*$, 
 where $L^*:W^*\to V^*$ denotes the dual linear map to $L:V\to W$. A representation $R$ is said to be \emph{self-dual} if
 $R^*=R$. Then $R(a_i):R(1)\to R(1)^*$ is identified with a bilinear form on $R(1)$ and, if $\chr\aK\ne2$, this form is
 skew-symmetric, since $R(a_i)^*=-R(a_i)$. One can check (cf. \cite{ser}) that a representation $R$ is isomorphic to a 
 self-dual one \iff there is a \emph{self-dual isomorphism} $f:R\to R^*$, i.e. such an isomorphism that $f(2)=f(1)^*$. 
 We usually identify a representation $R$ with the $n$-tuple of matrices describing the linear maps $R(a_i)$.
 
 Let $R$ be an indecomposable representation of $K_n$ which is not isomorphic to a self-dual one. Then $=R\+R^*$ is 
 isomorphic to a self-dual representation $R^+$, which cannot be decomposed into a direct sum of non-zero self-dual
 representations. Namely, $R^+$ is given by the $n$-tuple of skew-symmetric matrices
\[
  R^+(a_i)=\mtr{0&R(a_i)\\-R(a_i)^\top&0}.
 \] 
 If $\chr\aK\ne2$,
 every self-dual representation decomposes into a direct sum of indecomposable self-dual representations
 and representations of the form $R^+$, where $R$ is an indecomposable representation which is not isomorphic to any 
 self-dual one. Moreover, the direct summands of the form $R^+$ are defined uniquely up to permutation, isomorphisms 
 of the corresponding indecomposable representations $R$ and replacing $R$ by $R^*$ \cite[Theorem~1]{ser}.
 
 Obviously, if $n=1$, there are no indecomposable self-dual representations. In the next section we will see that the same 
 holds for $n=2$. On the contrary, if $n=3$, the representation $R$ such that $R(1)=R(2)=\aK^3$ and
 \[
  R(a_1)=\mtr{0&1&0\\-1&0&0\\0&0&0}\!,\ R(a_2)=\mtr{0&0&1\\0&0&0\\-1&0&0}\!,\
  R(a_3)=\mtr{0&0&0\\0&0&1\\0&-1&0}
 \]
 is indecomposable and self-dual. 
 
 Actually, a classification of representations of the quiver $K_n$ for $n>2$ is a so-called \emph{wild problem}. It means
 that it contains the classification of representations of every finitely generated algebra over the field $\aK$ (see
 \cite{dr} for precise definitions). The same is true for representations which are not isomorphic to self-dual. 
 Namely, let $n=3$, $R(1)=\aK^d,\,R(2)=\aK^{2d}$, 
 \[
  R(a_1)=\mtr{I_d\\0}\!,\ R(a_2)=\mtr{0\\I_d}\!,\ R(a_3)=\mtr{X\\Y}\!,
 \]
 where $I_d$ is the unit $d\XX d$ matrix, $X,Y$ are arbitrary square $d\XX d$ matrices. Obviously, $R$ is not self-dual. 
 One can easily check that two such representations given by the pairs $(X,Y)$ and $(X',Y')$ are isomorphic \iff the 
 pairs $(X,Y)$ and $(X',Y')$ are conjugate, i.e. $X'=SXS^{-1},\,Y'=SYS^{-1}$ for some invertible matrix $S$. 
 The problem of classification of pairs of square matrices up to conjugacy is a ``standard'' wild problem \cite{dr}. 
 Thus one cannot hope to get a more or less comprehensible classification of triples of skew-symmetric forms. This is even 
 more so for $n$-tuples with $n>3$. In the next section we will see that for $n=2$ the problem is ``\emph{tame}''\!, hence 
 there is a quite clear description of the corresponding groups.
 
 \begin{rem}\label{21} 
  If $\chr\aK=2$, the definition of a skew-symmetric bilinear form cannot be ``linearised'', since the condition $B(x,x)=0$
  is no more the consequence of the condition $B(x,y)=-B(y,x)$. Hence, we cannot identify $n$-tuples of skew-symmetric forms
  with self-dual representations of the quiver $K_n$. Moreover, the results of \cite{ser} are also valid only if 
  $\chr\aK\ne2$. Thus, to study Chernikov $2$-groups, we have to use quite different methods.
 \end{rem}
 
 \section{Case $n=2$}
 \label{s3}
 
 If $n=1$, $G$ is described by one skew-symmetric matrix $A$. This matrix is congruent to a direct sum of $k$ matrices
 $\smtr{0&1\\-1&0}$ and $l$ matrices $(0)$, where $m=2k+l$. It gives a simple description.
 
 \begin{prop}\label{30} 
  A nilpotent \chg\ $G$ with elementary top and quasi-cyclic bottom $M$ decomposes as $G_k\XX H_l$, where $G_k$
 is generated by $M$ and $2k$ elements $\lst{\hh}{2k}$ which are of order $p$, commute with all elements from $M$
 and their commutators $[\hh_i,\hh_j]$ for $i<j$ are given by the rule
 \[
 [\hh_i,\hh_j]=\begin{cases}
 a_1 &\text{\emph{if }} j=k+i,\\
 0 &\text{\emph{otherwise}},
\end{cases}
 \]
 where $a_1$ is a fixed element of order $p$ from the group $M$.
 \end{prop}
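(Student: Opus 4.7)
The plan is to invoke Corollary \ref{13} to reduce the statement to a normal form problem for a single skew-symmetric bilinear form over $\mF_p$, and then translate that normal form back into group-theoretic language.

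Since the bottom $M$ is a single quasi-cyclic group ($n=1$), the $n$-tuple $\bA$ is just one skew-symmetric $m\XX m$ matrix $A$ over $\mF_p$, considered up to congruence $A\mapsto PAP^\top$ with $P\in\gl(m,\mF_p)$. I would quote the classical normal form for skew-symmetric matrices over a field: $A$ is congruent to a block-diagonal matrix consisting of $k$ copies of $\smtr{0&1\\-1&0}$ together with $l=m-2k$ blocks $(0)$, where $2k=\mathop{\mathrm{rank}} A$. Applying a further permutation of basis vectors (also a congruence), I would rearrange this into
\[
 A=\mtr{0&I_k&0\\-I_k&0&0\\0&0&0_l}\!,
\]
so that by Corollary \ref{13} the group $G$ is isomorphic to the one defined by this matrix.

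Next I would translate $A$ back into the presentation of $G(\bA)$ from Section \ref{s1}. With respect to this basis, the relations $[\hh_i,\hh_j]=t_{ij}$ become $[\hh_i,\hh_{k+i}]=a_1$ for $1\le i\le k$ and $[\hh_i,\hh_j]=0$ for all other index pairs, while $p\hh_i=0$ and $\hh_i+a=a+\hh_i$ for $a\in M$ continue to hold. The first $2k$ generators together with $M$ then generate a subgroup $G_k$ whose relations are exactly those described in the statement, while the last $l$ generators commute with $M$ and with all the $\hh_j$.

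Finally I would set $N=\gnr{\hh_{2k+1},\dots,\hh_m}$ and show that $G=G_k\XX N$ with $N\simeq H_l$. By the relations, $N$ is elementary abelian of exponent $p$ and central in $G$, and $G_k\cdot N=G$ because these two subgroups jointly contain all generators. The one step I expect to require a moment of care — and thus the main (minor) obstacle — is checking $G_k\cap N=0$: projecting to $H_m=G/M$, the images of $\{\hh_1,\dots,\hh_{2k}\}$ and $\{\hh_{2k+1},\dots,\hh_m\}$ span complementary subspaces, so any element of $G_k\cap N$ must lie in $M\cap N$, which is trivial since $\hh_{2k+1},\dots,\hh_m$ are of order $p$ and independent modulo $M$. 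Everything else follows mechanically from Corollary \ref{13} and the normal form theorem.
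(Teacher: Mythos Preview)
Your proposal is correct and follows exactly the approach the paper takes: reduce via Corollary~\ref{13} to a single skew-symmetric matrix over $\mF_p$ and apply the classical normal form $\smtr{0&1\\-1&0}^{\oplus k}\oplus(0)^{\oplus l}$. The paper in fact gives no further argument beyond stating this normal form and declaring that ``it gives a simple description''; your additional verification of the direct-product decomposition $G=G_k\times N$ with $N\simeq H_l$ (normality, generation, and $G_k\cap N=0$) simply fills in details the paper leaves implicit.
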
 

 Now we consider the case $n=2$.
 
 Following the preceding consideration, we classify the pairs of skew-symmetric bilinear forms over a field $\aK$ with
 $\chr\aK\ne2$. Equivalently, we classify the self-dual representations of the Kronecker quiver $K_2$ with the
 involution $1^*=2,\,2^*=1,\,a_i^*=-a_i$. Recall \cite[Chapter~XII]{gan} that indecomposable representations 
 of $K_2$ (``matrix pencils'') are given by the following pairs of matrices;
 \begin{equation}\label{e31} 
 \begin{split}
  R_f:&\quad R_f(a_1)=I_d,\ R_f(a_2)=F(f),\\
  R_{\8,d}:&\quad R_{\8,d}(a_1)=F(x^d),\ R_{\8,d}(a_2)=I_d,\\
  R_{-,d}:&\quad R_{-,d}(a_1)=\mtr{1&0&0&\dots&0&0\\ 0&1&0&\dots&0&0\\\hdotsfor6\\0&0&0&\dots&1&0},\\
  		 &\quad R_{-,d}(a_2)=\mtr{0&1&0&\dots&0&0\\ 0&0&1&\dots&0&0\\\hdotsfor6\\0&0&0&\dots&0&1},\\
  R_{+,d}:&\quad R_{+,d}(a_i)=R_{-,d}(a_i)^\top.  
 \end{split}
\end{equation}  
 Here $f=f(x)$ is a polynomial of degree $d$ from $\aK[x]$ which is a power of a unital irreducible polynomial and 
 $F(f)$ is the Frobenius matrix with the characteristic polynomial $f(x)$.
 The size of the matrices in $R_{-,d}$ is $(d-1)\XX d$; respectively, the size of the matrices
 in $R_{+,d}$ is $d\XX(d-1)$. Actually, $R_{+,d}=(R_{-,d})^*$, $R_f^*\simeq R_f$ and 
 $R_{\8,d}^*\simeq R_{\8,d}$. Nevertheless, there are no self-dual indecomposable representations.
 
 \begin{prop}\label{31} 
  Neither of indecomposable representations from the preceding list is isomorphic to a self-dual one.
 \end{prop}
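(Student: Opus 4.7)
The plan is to handle the four families separately. For $R_{-,d}$ and $R_{+,d}$ the observation is purely numerical: their dimension vectors are $(d,d-1)$ and $(d-1,d)$, which are not invariant under the involution, whereas any representation isomorphic to a self-dual one must satisfy $\dim R(1)=\dim R(2)$. These two families are therefore ruled out immediately.

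For $R_f$ and $R_{\infty,d}$ the dimension vector is $(d,d)$, and a self-dual isomorphism $\phi\colon R\to R^*$ is determined by an invertible matrix $P$ (the matrix of $\phi(1)$) together with $\phi(2)=P^\top$. The intertwining conditions then read $P^\top R(a_i)+R(a_i)^\top P=0$ for $i=1,2$. For $R_f$, substituting $R_f(a_1)=I_d$ forces $P$ skew-symmetric, while $R_f(a_2)=F(f)$ gives $PF(f)=F(f)^\top P$. For $R_{\infty,d}$ the roles of $a_1$ and $a_2$ are swapped, producing the same pair of conditions with $F(f)$ replaced by $F(x^d)$.

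The heart of the argument is then the algebraic lemma: if $C=F(f)$ is the companion matrix of a polynomial of degree $d$ and $P$ satisfies $PC=C^\top P$, then $P=P^\top$. I would prove this by identifying $\aK^d$ with $\aK[x]/(f)$ so that $C$ acts as multiplication by $x$. The intertwining relation then says that the bilinear form $B(u,v)=u^\top P v$ satisfies $B(gu,v)=B(u,gv)$ for every $g\in\aK[x]/(f)$. Setting $\phi(w)=B(1,w)$ one obtains $B(u,v)=B(1,uv)=\phi(uv)$, and since the algebra is commutative this form is automatically symmetric, so $P=P^\top$.

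Combined with the skew-symmetry of $P$ and the hypothesis $\chr\aK\ne 2$, this forces $P=0$, contradicting invertibility. The main obstacle is precisely the commutative-algebra identification used in the lemma; once it is in place, the rest of the proof is just assembling the pieces.
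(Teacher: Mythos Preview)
Your argument is correct and follows the same route as the paper: the cases $R_{\pm,d}$ are dismissed on dimension grounds, and for $R_f$ (and analogously $R_{\infty,d}$) one reduces to showing that no invertible skew-symmetric $P$ satisfies $PF(f)=F(f)^\top P$. The paper merely asserts that ``one easily checks that it is impossible,'' whereas you supply a clean proof of this step via the identification $\aK^d\cong\aK[x]/(f)$, which makes the symmetry $B(u,v)=\phi(uv)$ transparent; this is a genuine improvement in exposition over the paper's terse treatment.
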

 \begin{proof}
  It is evident for $R_{\pm,d}$. The representation $R_f^*$ is given by the pair of matrices $(-I_d,-F(f)^\top)$. If it were
  isomorphic to a self-dual one, there would be an invertible $d\XX d$ matrix $P$ such that $PI_d=-I_dP^*$ and 
  $PF(f)=-F(f)^\top P^*$. Hence $P$ is skew-symmetric, and $PF(f)=F(f)^\top P$. One easily checks that 
  it is impossible. The same holds for $R_{\8,d}$.
 \end{proof}
 
 Combining this result with those from \cite{ser}, we get a complete classification of pairs of skew-symmetric 
 bilinear forms. We denote by $\dA$ the set of all pairs $R^+$, where $R\in\{R_f,R_{\8,d},R_{-,d}\}$, and by $\dF$
 the set of functions $\ka:\dA\to\mzg$ such that $\ka(\bA)=0$ for almost all $\bA$. For any function
 $\ka\in\dF$ we set $\dA^\ka=\bop_{\bA\in\dA}\bA^{\ka(\bA)}.$
 
 \begin{thm}\label{32} 
 Let $\chr\aK\ne2$.
 Any pair of skew-symmetric bilinear forms over the field $\aK$ is congruent to a direct sum $\dA^\ka$ for a uniquely 
 defined function $\ka\in\dF$.
 \end{thm}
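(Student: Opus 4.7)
The plan is to combine the classical Kronecker classification of indecomposable representations of $K_2$ (\cite[Chapter~XII]{gan}), Proposition~\ref{31}, and the Krull--Schmidt-type decomposition theorem for self-dual representations from \cite[Theorem~1]{ser}. By the discussion in Section~\ref{s2}, since $\chr\aK\ne2$, a pair of skew-symmetric bilinear forms over $\aK$ is the same datum as a self-dual representation of $K_2$ (equipped with the involution $1^*=2$, $a_i^*=-a_i$), and congruence of pairs corresponds to self-dual isomorphism of representations.

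For existence, I would apply \cite[Theorem~1]{ser} to the self-dual representation $R$ associated with a given pair. That theorem decomposes $R$ into indecomposable self-dual summands together with summands of the form $S^+$, where $S$ is an indecomposable representation not isomorphic to any self-dual one. By Proposition~\ref{31}, summands of the first kind cannot occur. By the Kronecker classification \eqref{e31}, every indecomposable $S$ is isomorphic to one of $R_f$, $R_{\8,d}$, $R_{-,d}$, $R_{+,d}$. Since $R_{+,d}=R_{-,d}^*$ and $S^+\simeq(S^*)^+$ (via swapping the two summands of $S\+S^*$), one may always take the representative of each $S^+$-summand from $\{R_f,\,R_{\8,d},\,R_{-,d}\}$. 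Collecting the resulting summands produces a decomposition of the required form $\dA^\ka$.

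For uniqueness, the uniqueness clause of \cite[Theorem~1]{ser} asserts that the $S^+$-summands are determined up to permutation, up to isomorphism of the underlying indecomposables $S$, and up to the substitution $S\leftrightarrow S^*$. The action of $*$ on isomorphism classes preserves each $R_f$ (because the Frobenius companion matrix $F(f)$ is conjugate to its transpose) and each $R_{\8,d}$, while it interchanges $R_{-,d}$ with $R_{+,d}$. Hence choosing one representative per $*$-orbit yields exactly the set $\dA$, so the multiplicity function $\ka\in\dF$ is uniquely determined by the original pair.

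The main conceptual content is already carried by \cite{ser,gan} and by Proposition~\ref{31}, so what remains is essentially a bookkeeping identification of the $*$-orbits on the indecomposables. The only genuine (though routine) verification is the isomorphism $R_f^*\simeq R_f$ (and similarly $R_{\8,d}^*\simeq R_{\8,d}$), which reduces to exhibiting a matrix conjugating $F(f)$ to $F(f)^\top$; this is a standard companion-matrix computation and should be mentioned but not elaborated, and note that no self-dual intertwiner is produced, consistently with Proposition~\ref{31}.
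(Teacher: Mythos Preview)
Your proposal is correct and follows exactly the paper's approach: the paper derives Theorem~\ref{32} by combining Proposition~\ref{31} with \cite[Theorem~1]{ser} and the Kronecker classification \eqref{e31}, noting (just before Proposition~\ref{31}) that $R_{+,d}=(R_{-,d})^*$, $R_f^*\simeq R_f$, and $R_{\8,d}^*\simeq R_{\8,d}$. Your write-up simply makes explicit the bookkeeping that the paper leaves to the reader.
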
 

 To obtain a classification of \chg{s} with elementary tops and the bottom $M\2$, we also have to answer the question:
 
  \smallskip
 \emph{Given two functions  with finite supports $\ka,\ka':\dA\to\mzg$, when are the pairs $\dA^\ka$ and $\dA^{\ka'}$
 weakly congruent}?
 
 \smallskip
 Evidently, $(\bA_1\+\bA_2)\cir Q=(\bA_1\cir Q)\+(\bA_2\cir Q)$, so the pairs $\bA$ and $\bA\cir Q$ are indecomposable 
 simultaneously. For every pair $\bA\in\dA$ we denote by $\bA*Q$ the unique pair from $\dA$ which is congruent to $\bA\cir Q$.
 The map $\bA\mapsto\bA*Q$ defines an action of the group $\gG=\gl(2,\aK)$ on the set $\dA$, hence on the set $\dF$ of functions
 $\ka:\dA\to\mzg$: $(Q*\ka)(\bA)=\ka(A*Q)$. Theorem~\ref{32} implies the following result.
 
 \begin{cor}\label{33} 
  The pairs $\dA^\ka$ and $\dA^{\ka'}$ are weakly congruent \iff the functions $\ka$ and $\ka'$ belong to the same 
  orbit of the group $\gG$.
 \end{cor}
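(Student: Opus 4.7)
The plan is to unfold the definition of weak congruence and then reduce to the uniqueness statement of Theorem~\ref{32}. By definition, $\dA^\ka$ and $\dA^{\ka'}$ are weakly congruent precisely when there exists $Q\in\gG$ such that the pairs $\dA^\ka\cir Q$ and $\dA^{\ka'}$ are congruent (the matrix $P$ in the definition of weak congruence is absorbed into the congruence relation). So the task is to understand how the operation $\cir Q$ acts on canonical decompositions.

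First I would observe that the operation $\cir Q$ is compatible with direct sums, as noted in the paper: $(\bA_1\+\bA_2)\cir Q=(\bA_1\cir Q)\+(\bA_2\cir Q)$. Therefore
\[
 \dA^\ka\cir Q \;=\; \bop_{\bA\in\dA}(\bA\cir Q)^{\ka(\bA)}.
\]
For each $\bA\in\dA$, by the very definition of the action $*$, the pair $\bA\cir Q$ is congruent to $\bA*Q\in\dA$, so the right-hand side is congruent to
\[
 \bop_{\bA\in\dA}(\bA*Q)^{\ka(\bA)}.
\]
Reindexing via the bijection $\bA\mapsto\bA*Q$ of $\dA$ (which has inverse $\bA\mapsto\bA*Q^{-1}$, since $*$ is a group action), this sum becomes $\dA^{\ka''}$ with $\ka''(\bA)=\ka(\bA*Q^{-1})=(Q^{-1}*\ka)(\bA)$. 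Hence $\dA^\ka\cir Q$ is congruent to $\dA^{Q^{-1}*\ka}$.

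The remaining step is to apply Theorem~\ref{32}: the function $\ka''\in\dF$ appearing in the decomposition of a given pair of skew-symmetric forms is unique. Thus $\dA^{\ka'}$ is congruent to $\dA^\ka\cir Q$ if and only if $\ka'=Q^{-1}*\ka$, which is equivalent to saying that $\ka$ and $\ka'$ lie in the same $\gG$-orbit. The only subtlety I expect is the routine bookkeeping to confirm that the natural action arising from $\cir Q$ is indeed the one defined by $(Q*\ka)(\bA)=\ka(\bA*Q)$ (up to taking $Q$ or $Q^{-1}$), but since orbits are closed under inversion of the group element, this causes no essential difficulty.
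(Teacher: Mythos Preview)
Your argument is correct and matches the paper's approach: the paper simply states that Corollary~\ref{33} follows from Theorem~\ref{32} without spelling out the details, and what you have written is precisely the routine verification that makes this implication explicit. The only point worth noting is that the paper gives no proof at all beyond the sentence ``Theorem~\ref{32} implies the following result,'' so your write-up is in fact more detailed than the original.
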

 
 Obviously, $R^+\cir Q=(R\cir Q)^+$ for every representation $R$ of the quiver $K_2$. Thus we have to know when 
 $R\cir Q\simeq R'$ for indecomposable representations from the list \eqref{e31}. As $R_{-,d}$ is a unique (up to
 isomorphism) indecomposable representation $R$ such that $\dim R(1)=d-1,\,\dim R(2)=d$, we only have to consider 
 the representations from the set $\{R_f,R_{\8,d}\}$. From \cite[Chapter~XII,\ \S\,3]{gan} it follows that a pair 
 $R=(R_1,R_2)$ from this set is completely defined by its \emph{homogeneous characteristic polynomial} 
 $\chi_R(x_1,x_2)=\det(x_1R_1-x_2R_2)$. Actually, $\chi_{R_f}=x_2^df(x_1/x_2)$, where $d=\deg f$, and 
 $\chi_{R_{\8,d}}=x_2^d$. The group $\gG$ naturally acts on the ring $\aK[x_1,x_2]$: 
 $Q\cir f=f(q_{11}x_1+q_{12}x_2,q_{21}x_1+q_{22}x_2)$, where $Q=(q_{ij})$, and
 \begin{align*}
    \chi_{R\cir Q}&=\det\big((q_{11}R_1+q_{21}R_2)x+(q_{12}R_1+q_{22}R_2)\big)=\\
    			&=\det\big((q_{11}x+q_{12})R_1+(q_{21}x+q_{22})R_2)=
    			Q\cir\chi_R.
 \end{align*}
 We say that an irreducible homogeneous polynomial $g\in\aK[x_1,x_2]$ is \emph{unital} if either $g=x_2$ or its leading 
 coefficient with respect to $x_1$ equals $1$. Let $\mP=\mP(\aK)$ be the set of unital homogeneous irreducible polynomials 
 from $\aK[x_1,x_2]$ and $\tP=\tP(\aK)=\mP\cup\{\eps\}$. Note that $\mP$ actually coincides with the set of the closed 
 points of the projective line $\mP^1_{\aK}=\mathrm{Proj}\,\aK[x_1,x_2]$ \cite{ha}. For $g\in\mP$ and $Q\in \gG$, 
 let $Q*g$ be the unique polynomial $g'\in\mP$ such that $Q\cir g=\la g'$ for some non-zero $\la\in\aK$. 
 (It is the natural action of $\gG$ on $\mP^1_{\aK}$.) We also set $Q*\eps=\eps$ for any $Q$. It defines an action 
 of $\gG$ on $\tP$. Denote by $\tF=\tF(\aK)$ the set of all functions $\rho:\tP\XX\mN\to\mzg$ such that $\rho(g,d)=0$ 
 for almost all pairs $(g,d)$. Define the actions of the group $\gG$ on $\tF$ setting $(\rho*Q)(g,d)=\rho(Q*g,d)$. 
 For every pair $(g,d)\in\tF$ we define a pair of skew-symmetric
 forms $R(g,d)$:
 \[
  R(g,d)=\begin{cases}
  R_{-,d}^+ &\text{if } g=\eps,\\
  R_{\8,d}^+ &\text{if } g=x_2,\\
  R_{g(x,1)^d}^+ &\text{otherwise}.
  \end{cases}
 \]
 Let $\tA=\tA(\aK)=\{R(g,d)\,|\,(g,d)\in\tP\XX\mN\}$. For every function $\rho\in\tF$ we set
 $\tA^\rho=\bop_{(g,d)\in\tP\XX\mN}R(g,d)^{\rho(g,d)}$.
 The preceding considerations imply the following theorem.

 \begin{thm}\label{34} 
 Let $\chr\aK\ne2$.
 \begin{enumerate}
 \item  Every pair of skew-symmetric bilinear forms over the field $\aK$ is weakly congruent to $\tA^\rho$ for some function 
 $\rho\in\tF(\aK)$.
 \item  The pairs $\tA^\rho$ and $\tA^{\rho'}$ are weakly congruent \iff the functions $\rho$ and $\rho'$ belong to
 the same orbit of the group $\gG=\gl(2,\aK)$.
 \end{enumerate}
 \end{thm}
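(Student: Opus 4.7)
My plan is to deduce both parts of Theorem~\ref{34} from Theorem~\ref{32} and Corollary~\ref{33} via a $\gG$-equivariant bijection $\Phi:\tP\XX\mN\to\dA$, $(g,d)\mapsto R(g,d)$. Part~(1) will then be Theorem~\ref{32} transported through $\Phi$: any skew-symmetric pair is congruent, hence weakly congruent, to some $\dA^\ka$, and setting $\rho(g,d)=\ka(\Phi(g,d))$ yields $\tA^\rho=\dA^\ka$. To see that $\Phi$ is a bijection, I match the three types of elements of $\dA$ with the three cases in the definition of $R(g,d)$: the summand $R_{-,d}^+$ comes from $(\eps,d)$, the summand $R_{\8,d}^+$ comes from $(x_2,d)$, and $R_f^+$ (for $f$ a power of a unital irreducible polynomial in $\aK[x]$) comes from $(g,d)\in(\mP\setminus\{x_2\})\XX\mN$, using that dehomogenization $g\mapsto g(x,1)$ is a bijection between $\mP\setminus\{x_2\}$ and the unital irreducibles of $\aK[x]$, with the exponent encoded by $d$.

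For part~(2), Corollary~\ref{33} says $\dA^\ka$ and $\dA^{\ka'}$ are weakly congruent iff $\ka,\ka'$ lie in the same $\gG$-orbit in $\dF$. So I must check that the action $\bA\mapsto\bA*Q$ on $\dA$ matches, through $\Phi$, the action $(g,d)\mapsto(Q*g,d)$ on $\tP\XX\mN$. The excerpt already observes $R^+\cir Q=(R\cir Q)^+$, so this reduces to analyzing $R\cir Q$ for $R\in\{R_f,R_{\8,d},R_{-,d}\}$. For $R=R_{-,d}$, the pencil $R\cir Q$ has matrices of size $(d-1)\XX d$ and is indecomposable (as $\cir Q$ admits an inverse), so by Gantmacher's Kronecker classification it must itself be isomorphic to $R_{-,d}$; this yields $R(\eps,d)*Q=R(\eps,d)$. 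For $R\in\{R_f,R_{\8,d}\}$, I invoke Gantmacher's theorem that such $R$ are determined up to isomorphism by the class of $\chi_R$ modulo nonzero scalars, together with the excerpt's formula $\chi_{R\cir Q}=Q\cir\chi_R$. Since the base representation underlying $R(g,d)$ has characteristic polynomial proportional to $g^d$, the base representation underlying $R(g,d)\cir Q$ has characteristic polynomial proportional to $(Q*g)^d$, and hence $R(g,d)\cir Q\simeq R(Q*g,d)$, as required.

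The main obstacle will be the appeal to Gantmacher: one needs $\chi_R$ modulo scalars to be a complete invariant within $\{R_f,R_{\8,d}\}$, and one must confirm no ambiguity in reading $(g,d)$ back from a polynomial proportional to $g^d$. The latter is automatic because $\mP$ consists of unital representatives (so the proportionality class determines $g$) and different values of $d$ produce polynomials of different total degree. With these checks in place, the two $\gG$-orbit descriptions coincide, and combined with the first step this proves both statements.
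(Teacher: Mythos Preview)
Your proposal is correct and follows exactly the route the paper takes: the paper's proof is the phrase ``The preceding considerations imply the following theorem,'' and those considerations are precisely the $\gG$-equivariant identification of $\tP\times\mN$ with $\dA$ via $(g,d)\mapsto R(g,d)$, together with Theorem~\ref{32} and Corollary~\ref{33}. One small remark: your justification that $(g,d)$ can be read back from a scalar multiple of $g^d$ should appeal to unique factorization in $\aK[x_1,x_2]$ rather than to total degree, since distinct pairs $(g,d)$ can yield the same degree $d\cdot\deg g$.
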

  
 From Theorem~\ref{34} and Corollary~\ref{13} we immediately obtain a classification of nilpotent \chg{s} with elementary 
 tops and the bottom $M\2$. Namely, for every function $\rho\in\tF(\mF_p)$ set $G(\rho)=G(\tA^\rho)$.
 
 \begin{thm}\label{35} 
 Let $\dR$ be a set of representatives of orbits of the group $\gG=\gl(2,\mF_p)$ acting on the set of functions
 $\tF(\mF_p)$. Then every nilpotent \chg\ with elementary top and the bottom $M\2$ is isomorphic to the group
 $G(\rho)$ for a uniquely defined function $\rho\in\dR$.
 \end{thm}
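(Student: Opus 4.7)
The plan is to derive Theorem~\ref{35} as a straightforward combination of Corollary~\ref{13} and Theorem~\ref{34}; the heavy lifting has already been done in those results, so the task reduces to a careful translation between the group-theoretic and linear-algebraic languages and to tracking what ``unique'' means on both sides.

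First, I would fix an arbitrary nilpotent Chernikov $p$-group $G$ with elementary top $H_m$ and bottom $M\2$, and attach to it the pair $\bA=(A_1,A_2)$ of skew-symmetric $m\XX m$ matrices over $\mF_p$ obtained from the commutator form $t(h_i,h_j)=t_{ij}\in M_p\2\simeq\mF_p^2$, as in Section~\ref{s1}. By Theorem~\ref{34}(1) there is a function $\rho\in\tF(\mF_p)$, necessarily supported on pairs $(g,d)$ whose associated block sizes add up to $m$, such that $\bA$ is weakly congruent to $\tA^\rho$. Choosing the unique representative of the $\gG$-orbit of $\rho$ that lies in $\dR$ — call it again $\rho$ — Theorem~\ref{34}(2) shows that $\bA$ is still weakly congruent to $\tA^\rho$, and Corollary~\ref{13} then gives $G\simeq G(\tA^\rho)=G(\rho)$. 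This establishes existence.

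For uniqueness, I would argue in the reverse direction. Suppose $G(\rho)\simeq G(\rho')$ for some $\rho,\rho'\in\dR$. Both groups have the same bottom $M\2$, and since $H_m$ is recovered as $G/M\2$, they also have the same top, hence the matrix pairs $\tA^\rho$ and $\tA^{\rho'}$ have the same size. By Corollary~\ref{13}, $\tA^\rho$ and $\tA^{\rho'}$ are weakly congruent, so by Theorem~\ref{34}(2) the functions $\rho$ and $\rho'$ lie in the same $\gG$-orbit on $\tF(\mF_p)$. Because $\dR$ contains exactly one representative per orbit, this forces $\rho=\rho'$.

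There is essentially no genuine obstacle here — the core difficulties were already absorbed into Theorem~\ref{34} (the classification of matrix pencils together with Sergeichuk's self-duality theorem cited from \cite{ser}) and into the passage from automorphisms of $M\2$ to $\gl(2,\mF_p)$ handled in Corollary~\ref{13}. The only point that deserves mild care is verifying that the size $m$ of the top is preserved under all of these steps, which follows from the fact that the total dimension of $\tA^\rho$ is a function of $\rho$ alone and is preserved by weak congruence.
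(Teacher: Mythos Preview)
Your argument is correct and matches the paper's approach exactly: the paper states Theorem~\ref{35} as an immediate consequence of Theorem~\ref{34} and Corollary~\ref{13} without writing out a separate proof, and you have simply spelled out that deduction in full. The only implicit hypothesis worth flagging is $p\ne2$, inherited from Theorem~\ref{34}, but the paper leaves this tacit as well.
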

  
 One can easily describe these groups in terms of generators and relations. Note that all of them are of the form
 $G(\bA)$, where $\bA=\bop_{k=1}^s\bA_k$ and all $\bA_k$ belong to the set $\{R^+_{-,d},R^+_{\8,d},R^+_f\}$. Therefore
 $G(\bA)$ is generated by the subgroup $M\2$ and elements $\hh_{ki}$, where $1\le k\le s,\,1\le i\le d_k$, 
 $d_k=2\deg f$ if $\bA_k=R^+_f$, $d_k=2d$ if $\bA_k=R^+_{\8,d}$, and $d_k=2d-1$ if $\bA=R^+_{-,d}$. 
 All elements $\hh_{ki}$ are of order $p$, 
 commute with the elements from $M\2$, $[\hh_{ki},\hh_{lj}]=0$ if $k\ne l$ and the values of the commutators 
 $[\hh_{ki},\hh_{kj}]$ for $i<j$, according to the type of $\bA_k$, are given in Table~1. In this table $a_1$ 
 and $a_2$ denote some fixed generators of the subgroup $M_p\2$.
 
 \begin{table}[!ht]
\caption{}
 \[
  \begin{array}{|c|c|c|}
  \hline
  \FF \bA_k & i,j & [\hh_{ki},\hh_{kj}] \\
  \hline
    \FF R^+_{-,d}\ & j=d+i & a_1\\
  & j=d+i-1 & a_2 \\
  & \text{ otherwise } & 0  \\
  \hline
  \FF R^+_{\8,d} & j=d+i & a_2,\\
  & j=d+i-1 & a_1,\\
  & \text{ otherwise } & 0 \\
  \hline
  \FF R^+_f &\ j=d+i<2d\ & a_1\\
   &j=d+i-1 &
  a_2 \\
  & i<d,\,j=2d &\  -\la_{d-i+1}a_2\ \\
  & i=d,\,j=2d &\ a_1-\la_1a_2\  \\
  & \text{ otherwise } & 0  \\
  \hline
  \end{array}
 \]
 \centerline{ where $f(x)=x^d+\la_1 x^{d-1}+\dots+\la_d$}
 \end{table}

 \begin{cor}\label{36} 
 Let $G=G(\bA)$. 
 \begin{enumerate}
 \item  $G$ has a finite direct factor \iff $\bA\simeq(R_{-,1})^k\+\bA'$; then $G\simeq H_k\XX G(\bA')$.
 
 \item  Suppose that $G$ has no finite direct factors. It is decomposable \iff $\bA\simeq(R^+_x)^{k}\+(R^+_{\8,1})^{l}$;
 then $G=G_k\XX G_l$.
 \end{enumerate}
 \emph{(See Proposition~\ref{30} for the definition of $G_k$.)}
 \end{cor}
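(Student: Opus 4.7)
The plan is to combine Theorem~\ref{34} (canonical form of $\bA$ under weak congruence), Proposition~\ref{30} (structure of nilpotent \chg{s} with quasi-cyclic bottom), and Corollary~\ref{13} (weak congruence of $\bA$ corresponds to isomorphism of $G(\bA)$). The technical heart is reading off the direct-product structure of $G$ from suitably controlled decompositions of $\bA$.

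For part~(1), the easy direction is immediate: $R^+_{-,1}$ is a pair of $1\XX1$ zero matrices, so $k$ copies of it contribute generators $\hh$ that commute with every other generator and with $M\nn$; they span a central elementary abelian subgroup isomorphic to $H_k$ which, together with the subgroup generated by $M\nn$ and the remaining $\hh$'s, gives $G\simeq H_k\XX G(\bA')$. For the converse, a nontrivial finite direct factor $F$ satisfies $F\cap M\nn=0$ (since $M\nn$ is the unique maximal divisible subgroup) and $F\subset Z(G)$. The commutator identity $[\sum\al_i\hh_i,\hh_j]=\sum\al_it_{ij}$ then shows that central lifts correspond to vectors $v=(\al_i)\in\mF_p^m$ with $v^\top A_i=0$ for all $i$, i.e.\ to the common kernel $K=\bigcap_i\ker A_i$ (the left and right kernels agree by skew-symmetry). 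Choosing a basis of $K$ and extending it to a basis of $\mF_p^m$ puts each $A_i$ into block form $\smtr{0 & 0 \\ 0 & A_i'}$ with a $(\dim K)\XX(\dim K)$ zero top-left block, so $\bA$ is congruent to $(R^+_{-,1})^{\dim K}\+\bA''$; Theorem~\ref{34} gives uniqueness.

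For part~(2), the no-finite-factor hypothesis rules out the summand $R^+_{-,1}$ by part~(1). For the ``if'' direction, the commutator table shows that in $(R^+_x)^k\+(R^+_{\8,1})^l$ the first $2k$ generators have commutators lying in $\gnr{a_1}$, the last $2l$ in $\gnr{a_2}$, and the two families commute; the subgroups generated by $M_1$ together with the first family and by $M_2$ together with the second intersect trivially, commute, and span $G$, giving $G\simeq G_k\XX G_l$. For the ``only if'' direction, suppose $G=G^{(1)}\XX G^{(2)}$ is a proper decomposition with neither factor finite. Both divisible radicals are then nonzero, sum to $M\2$, and so each is a quasi-cyclic subgroup $M_i'\subset M\2$. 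Each $G^{(i)}$ is therefore a nilpotent \chg\ with elementary top and quasi-cyclic bottom $M_i'$; Proposition~\ref{30} together with the no-finite-factor hypothesis forces $G^{(i)}=G_{k_i}$, whence $G=G_{k_1}\XX G_{k_2}$. In the basis $(a_1',a_2')$ of $M_p\2$ adapted to $M\2=M_1'\+M_2'$, the matrix pair $\bA$ is $(R^+_x)^{k_1}\+(R^+_{\8,1})^{k_2}$; the change of basis back to $(a_1,a_2)$ is an element of $\gG=\gl(2,\mF_p)$, i.e.\ a weak congruence.

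The main obstacle is part~(2)$(\Rightarrow)$: a decomposition of the group $G$ forces not only a splitting of the top generators but also a compatible splitting of the bottom $M\2$ into quasi-cyclic subgroups $M_i'$ that are usually not the standard $M_1,M_2$. Recognising that passing to the adapted basis is precisely the $\gG$-action defining weak congruence (as opposed to ordinary congruence, which only changes the top) is the key step. A related subtlety is that a direct-sum decomposition $\bA=\bA_1\+\bA_2$ of matrix pairs does not by itself give a direct product decomposition of $G(\bA)$, because the bottom $M\2$ is shared between the blocks; only the special summands $R^+_{-,1}$ (zero commutators) and $R^+_x$, $R^+_{\8,1}$ (commutators confined to a single coordinate axis of $M_p\2$) produce genuine group direct factors.
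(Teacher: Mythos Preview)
Your argument is correct and considerably more detailed than the paper's, which simply declares the result \emph{evident} and gives no proof at all. Your handling of part~(2)$(\Rightarrow)$---splitting $M\2$ along the divisible radicals of the two factors, invoking Proposition~\ref{30} on each, and then recognising the basis change on $M_p\2$ as the $\gG$-action that defines weak congruence---is exactly the point that makes the corollary non-tautological, and you isolate it cleanly. One small justification you leave implicit in part~(1) is why a finite direct factor $F$ must be central: since $[G,G]\subset M_p\nn$ and $F\cap M\nn=0$, we get $[F,F]=0$, so $F$ is abelian and hence central in $F\times G'$; this is worth a clause. Also note that the paper writes $(R_{-,1})^k$ rather than $(R^+_{-,1})^k$; your reading of this as the pair of $1\times1$ zero matrices is the intended one.
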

 \begin{proof}[Proof \emph{is evident}]
 \end{proof}


\subsection*{Acknowledgment}

  The authors are grateful to Igor Shapochka who initiated this investigation.

\end{document}